\newenvironment{enumeratei}{\begin{enumerate}[\upshape (i)]}{\end{enumerate}}
\numberwithin{equation}{section}
\theoremstyle{plain}
 \newtheorem{theorem}{Theorem}[section]
 \newtheorem{lemma}[theorem]{Lemma}
 \newtheorem{proposition}[theorem]{Proposition}
\theoremstyle{definition}
 \newtheorem{remark}[theorem]{Remark}
\newcommand \Con{\textup{Con}}
\newcommand \Sizes{\textup{ConSizes}}
\newcommand \Hered{\textup{Hered}}
\newcommand \Dir{\textup{Irr}}
\newcommand \Jir{\textup{J}}
\newcommand \Mir{\textup{M}}
\newcommand \Jred{\textup{JRed}}
\newcommand \Mred{\textup{MRed}}
\newcommand \Qu{\textup{Q}}
\newcommand \con{\textup{con}}
\newcommand \ucov [1] {#1^+}
\newcommand \lcov [1] {#1^-}
\newcommand \cgeq {\equiv_{\textup{con}}}
\newcommand \cgleq {\leq_{\textup{con}}}
\newcommand \nnul {\mathbb N_0}
\newcommand \nplu {\mathbb N^+}
\newcommand \KRlist {\mathcal L_{\textup{KR}}}
\newcommand \jl {\mathop{\vee_{\kern -1pt L}}}
\newcommand \jk {\mathop{\vee_{\kern -1pt K}}}
\newcommand \ml {\mathop{\wedge_L}}
\newcommand \txtleq [1] {\mathrel{\overset{\textup{#1}}\leq}}
\newcommand \tdual[1] {#1$^{\textup d}$}
\newcommand \mdual[1] {#1^{\textup{dual}}}
\newcommand \tdn {\mathord{\searrow}}
\newcommand \tup {\mathord{\nearrow}}
\newcommand \rskip {\kern -8pt}
\newcommand \tuple [1] {\langle #1\rangle}
\newcommand \pair [2] {\tuple{#1,#2}}
\newcommand \tbf [1] {\textbf{#1}} 
\newcommand \set [1] {\{#1\}}
\newcommand \red [1] {{\textcolor{red}{#1}}}
\newcommand \url[1]{{\small{\texttt{#1}}}}
\newcommand \nonau {\\}
\begin{document}
\title[Lattices with many congruences]
{Lattices with many congruences are planar}

\author[G.\ Cz\'edli]{G\'abor Cz\'edli}
\email{czedli@math.u-szeged.hu}
\urladdr{http://www.math.u-szeged.hu/~czedli/}
\address{University of Szeged, Bolyai Institute, 
Szeged, Aradi v\'ertan\'uk tere 1, 
HUNGARY 6720}

\thanks{This research was supported by the Hungarian Research Grant KH 126581}

\subjclass[2010]{06B10\hfill\red{version July 22, 2018}} 

\keywords{Planar lattice, lattice congruence, congruence lattice}

\dedicatory{Dedicated to the memory of Ivan Rival}

\begin{abstract} Let $L$ be an $n$-element finite lattice. We prove that if $L$ has strictly more than $2^{n-5}$ congruences, then $L$ is planar. This result is sharp, since for each natural number $n\geq 8$, there exists a non-planar lattice with 
exactly $2^{n-5}$ congruences. 
\end{abstract}

\maketitle

\section{Aim and introduction}

Our goal is to prove the following statement.

\begin{theorem}\label{thmmain}
Let $L$ be an $n$-element finite lattice. If $L$ has strictly more than $2^{n-5}$ congruences, then it is a planar lattice.
\end{theorem}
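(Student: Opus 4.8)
The plan is to prove the contrapositive: if $L$ is non-planar then $|\Con(L)|\le 2^{n-5}$. The engine is the multiplicativity of the congruence count over ordinal sums. Call an element $c\in L\setminus\{0,1\}$ a \emph{narrows} if it is comparable to every element, equivalently if $L={\downarrow}c\oplus{\uparrow}c$; every finite lattice then decomposes uniquely as a maximal ordinal sum $L=C_1\oplus C_2\oplus\cdots\oplus C_t$ of narrows-free (ordinally indecomposable) intervals. Since each prime interval of $L$ lies in exactly one summand and no nontrivial congruence links distinct summands, I would first record the isomorphism $\Con(L)\cong\prod_{i=1}^t\Con(C_i)$, whence $|\Con(L)|=\prod_{i=1}^t|\Con(C_i)|$. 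Because an ordinal sum of planar lattices is again planar (stack the planar diagrams, identifying gluing points), non-planarity of $L$ forces at least one summand $C_{i_0}$ to be non-planar; note that $C_{i_0}$ is automatically narrows-free.

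The whole theorem then reduces to the following estimate, which I regard as the heart of the matter: if $K$ is a narrows-free non-planar lattice with $m$ elements, then $|\Con(K)|\le 2^{m-5}$. Granting this, the conclusion follows by routine bookkeeping. Writing $m_i=|C_i|$, so that $\sum_i(m_i-1)=n-1$, and using the universal bound $|\Con(C_i)|\le 2^{m_i-1}$ for $i\ne i_0$ together with the key estimate for $C_{i_0}$, one obtains
\begin{equation*}
|\Con(L)|\;\le\;2^{\,m_{i_0}-5}\cdot\prod_{i\ne i_0}2^{\,m_i-1}\;=\;2^{\,(m_{i_0}-5)+(n-m_{i_0})}\;=\;2^{\,n-5}.
\end{equation*}

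To attack the key estimate I would measure $|\Con(K)|$ against the universal bound $|\Con(K)|\le 2^{m-1}$ (with equality exactly for chains) and show that non-planarity forces an extra factor of $2^4$ to be lost. The natural bookkeeping device is the set of $\cgeq$-classes of prime intervals: this set indexes the join-irreducible congruences, and since $\Con(K)$ is distributive one has $|\Con(K)|\le 2^{j}$, where $j$ is the number of such classes, with equality iff $\Con(K)$ is Boolean. Thus it suffices to produce four lost units split between (a) a deficit caused by distinct prime intervals being forced into a common $\cgeq$-class — each perspective pair, such as opposite edges of a covering square, merging two classes — and (b) a failure of $\Con(K)$ to be Boolean. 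I would extract these by invoking the Kelly--Rival description of the minimal non-planar configurations, reading off from each forbidden configuration enough covering squares and interleaved prime-interval chains to guarantee the required collapses and the non-distributive-complement patterns in $\Con(K)$.

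The main obstacle is precisely this last translation: converting the topological statement ``$K$ cannot be drawn in the plane'' into the combinatorial statement ``at least four units of congruence-multiplicity are lost.'' The difficulty is that congruences do not restrict well to sublattices, so one cannot simply transfer the deficit from an embedded forbidden configuration; I expect instead a careful case analysis over the small narrows-free non-planar lattices — beginning with the eight-element minimal examples, where $|\Con(K)|=8=2^{8-5}$ already shows the estimate is tight — combined with an inductive gluing argument to propagate the bound to larger $K$. The same extremal eight-element piece, placed on top of a chain, should then witness the sharpness asserted in the abstract.
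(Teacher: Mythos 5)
Your reduction to the narrows-free case is correct (the glued ordinal-sum decomposition, the multiplicativity $|\Con(L)|=\prod_i|\Con(C_i)|$, and the bookkeeping $\sum_i(m_i-1)=n-1$ all check out), and you have correctly identified both the counting framework ($|\Con(K)|\leq 2^{j}$ with $j$ the number of classes of join-irreducible congruences) and the central difficulty, namely that congruences do not restrict to subposets, so a deficit cannot simply be transferred from an embedded Kelly--Rival configuration. But the proof stops exactly there: the ``key estimate'' ($K$ narrows-free, non-planar, $m$-element implies $|\Con(K)|\leq 2^{m-5}$) is asserted, not proved, and the route you sketch for it --- a case analysis over small narrows-free non-planar lattices plus ``an inductive gluing argument to propagate the bound to larger $K$'' --- is not a workable plan. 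Narrows-free non-planar lattices form an infinite family with no recursive structure that such an induction could exploit (they do not decompose ordinally by definition, and non-planar lattices need not be dismantlable), and nothing in the proposal explains how the four lost units would actually be located in an arbitrary such $K$. Since this estimate is the entire content of the theorem, the proposal has a genuine gap at its heart.

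For comparison, the paper closes this gap without any ordinal-sum reduction, by making the subposet transfer work after all. The counting is done through reducible \emph{elements} rather than through an induction: if $K\in\KRlist$ is a subposet of $L$ and $K$ has at least $4$ join-reducible or $4$ meet-reducible elements, then so does $L$ (Lemma~\ref{lemmalatsubPoset}\eqref{lemmalatsubPosetc}, which shows that distinct joins of incomparable pairs in $K$ stay distinct and join-reducible in $L$), whence $|\Jir(L)|\leq n-5$ and \eqref{eqFJNinequality} finishes. This disposes of every member of $\KRlist$ except $E_0$ and $F_0$, which have only $3$ join-reducible and $3$ meet-reducible elements; for these two, a hands-on transposition argument inside $L$ (Lemmas~\ref{lemmaFnul} and~\ref{lemmaEnul}) produces two distinct elements of $\Jir(L)$ generating the same principal congruence, cutting $|\Qu(L)|$ down to $n-5$. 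So where you hoped to lose four units to an unspecified induction, the paper loses them either to four reducible elements inherited from the forbidden configuration, or to three reducible elements plus one forced collapse of $\cgeq$-classes.
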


In order to point out that this result is sharp, we will also prove the following easy remark. An $n$-element finite lattice $L$ is \emph{dismantlable} if there is a sequence $L_1\subset L_2\subset \dots\subset L_n=L$ of its sublattices such that $|L_i|=i$ for every $i\in\set{1,\dots,n}$; see Rival~\cite{rivaldismentlable}. We know from Kelly and Rival~\cite{kellyrival} that every finite planar lattice is dismantlable.

\begin{remark}\label{remrPSTk}
For each natural number $n\geq 8$, there exists an $n$-element non-dismantlable lattice $L(n)$ with exactly $2^{n-5}$ congruences;
this $L(n)$ is non-planar. 
\end{remark}

We know from Freese~\cite{freesecomplat} that an $n$-element lattice $L$ has at most $2^{n-1}=16\cdot 2^{n-5}$ congruences. In other words, denoting the lattice of congruences of $L$ by $\Con(L)$, we have that $|\Con(L)|\leq 2^{n-1}$. 
For $n\geq 5$, the second largest number of the set 
\[\Sizes(n):=\{|\Con(L)|: L\text{ is a lattice with }|L|=n\}
\]
is $8\cdot 2^{n-5}$ by Cz\'edli~\cite{czglatmanycongr}, while 
Kulin and Mure\c san~\cite{kulinmuresan} proved that the  third, fourth, and fifth largest numbers of $\Sizes(n)$ are 
$5\cdot 2^{n-5}$, $4\cdot 2^{n-5}$, and $\frac 7 2 \cdot 2^{n-5}$, respectively. Since both \cite{czglatmanycongr} and Kulin and Mure\c san~\cite{kulinmuresan} described the lattices witnessing these numbers, 
it follows from these two papers that $|\Con(L)|\geq \frac 7 2 \cdot 2^{n-5}$ implies the planarity of $L$. So, \cite{czglatmanycongr},  \cite{kulinmuresan}, and even their precursor, Mure\c san~\cite{muresan} have naturally lead to the conjecture that if an $n$-element lattice $L$ has \emph{many} congruences with respect to $n$, then $L$ is necessarily planar. However, the present paper needs a technique different from Kulin and Mure\c san~\cite{kulinmuresan}, because a \cite{kulinmuresan}-like description of the 
lattices witnessing the sixth, seventh, eighth, \dots, $k$-th largest numbers in $\Sizes(n)$ seems to be hard to find and  \emph{prove}; we do not even know how large is $k$. 
Fortunately, we can rely on the powerful description of planar lattices 
given by Kelly and Rival~\cite{kellyrival}.

Note that although an $n$-element finite lattice with ``\emph{many}'' (that is, more than $2^{n-5}$) congruences
is necessarily planar by  Theorem~\ref{thmmain}, an $n$-element planar lattice may have only very \emph{few} congruences even for large $n$.  For example, the $n$-element modular lattice of length 2, denoted usually by $M_{n-2}$, has only two congruences if $n\geq 5$. On the other hand, 
we know, say, from Kulin and Mure\c san~\cite{kulinmuresan}  that  there are a lot of lattices $L$ with many congruences, whereby a lot of lattices belong to the scope of Theorem~\ref{thmmain}.

\subsection*{Outline and prerequisites} 
Section~\ref{sectionsomeknown} recalls some known facts from the literature and, based on these facts, proves Remark~\ref{remrPSTk} in three lines.
The rest of the paper is devoted to the proof of Theorem~\ref{thmmain}. 

Due to Section~\ref{sectionsomeknown}, the reader is assumed to have  only little familiarity with lattices. Apart from some figures from Kelly and Rival~\cite{kellyrival}, which should be at hand while reading, the present paper is more or less self-contained modulo the above-mentioned familiarity.  
Note that \cite{kellyrival} is an open access paper at the time of this writing; see  
\url{http://dx.doi.org/10.4153/CJM-1975-074-0}.

\section{Some known facts about lattices and their congruences}\label{sectionsomeknown}
In the whole paper, \emph{all lattices are assumed to be finite} even if this is not repeated all the time.
 For a finite lattice $L$, the set of nonzero join-irreducible elements, that of nonunit meet-irreducible elements, and that of doubly irreducible (neither 0, not 1) elements will be denoted by $\Jir(L)$, $\Mir(L)$, and $\Dir(L)=\Jir(L)\cap\Mir(L)$, respectively. 
For $a\in \Jir(L)$ and $b\in \Mir(L)$, the unique lower cover of $a$ and the unique (upper) cover of $b$ will be denoted by $\lcov a$ and $\ucov b$, respectively. For $a,b\in L$, let $\con(a,b)$ stand for the smallest congruence of $L$ such that $\pair a b\in \con (a,b)$. For $x,y\in \Jir(L)$, let $x \cgeq y$ mean that $\con(\lcov x,x)=\con(\lcov y,y)$. Then $\cgeq$ is an equivalence relation on $\Jir(L)$, and the corresponding quotient set will be denoted by 
\begin{equation} 
\Qu(L) :=  \Jir(L) / \mathord{\cgeq} .
\label{eqQdFnlM}
\end{equation}
As an obvious consequence of Freese, Je\v zek and Nation~\cite[Theorem 2.35]{freesejezeknation} or Nation~\cite[Corollary to Theorem 10.5]{nation},  for every finite lattice $L$, 
\begin{equation}
|\Con(L)| \leq 2^{|\Qu(L)|} \leq 2^{|\Jir(L)|};
\label{eqFJNinequality}
\end{equation}
more explanation will be given later.
The situation simplifies for distributive lattices; 
it is well known that
\begin{equation}
\text{if $L$ is a finite \emph{distributive} lattice, then 
$|\Con(L)| = 2^{|\Jir(L)|}$.}
\label{eqtxtmodBoole}
\end{equation}
In order to explain how to extract \eqref{eqFJNinequality} and \eqref{eqtxtmodBoole} from the literature, we recall some facts.  A \emph{quasiordered set} is a structure $\tuple{A;\leq}$, where $\leq$ is a quasiordering, that is, a reflexive and symmetric relation on $A$.
For example, if we let $a\cgleq b$ mean $\con(\lcov a,a)\leq \con(\lcov b,b)$, then $\tuple{\Jir(L);\cgleq}$ is a quasiordered set.  
A subset $X$ of $\tuple{A;\leq}$ is \emph{hereditary}, if $(\forall x\in X)(\forall y\in A)(y\leq x\Rightarrow y\in X)$. The set of all hereditary subsets of $\tuple{A;\leq}$ with respect to set inclusion forms a lattice 
$\Hered(\tuple{A;\leq})$. Freese, Je\v zek and Nation~\cite[Theorem 2.35]{freesejezeknation} can be reworded as $\tuple{\Con(L);\subseteq}\cong \Hered(\tuple{\Jir(L); \cgleq})$.
To recall this theorem in a form closer to ~\cite[Theorem 2.35]{freesejezeknation}, for the  $\cgeq$-blocks of   $a,b\in \Jir(L)$, we define the meaning of  
$a/\mathord{\cgeq} \cgleq  a/\mathord{\cgeq}$ as $a\cgleq b$. In this way, we obtain a poset $\tuple{\Qu(L); \cgleq/ \mathord{\cgeq} }$.  With this notation, the original form of  \cite[Theorem 2.35]{freesejezeknation} states that $\Con(L)\cong \Hered(\Qu(L); \cgleq/ \mathord{\cgeq} )$.

Since $\cgeq$ will play an important role later, recall that for intervals $[a,b]$ and $[c,d]$ in a lattice $L$, $[a,b]$ \emph{transposes up} to $[c,d]$ if $b\wedge c=a$ and $b\vee c=d$. This relation between the two intervals will be denoted by $[a,b]\tup [c,d]$.  We say that  $[a,b]$ \emph{transposes down} to $[c,d]$, in notation   $[a,b]\tdn [c,d]$ if $[c,d]\tup [a,b]$. We call $[a,b]$ and $[c,d]$ \emph{transposed intervals} if $[a,b]\tdn [c,d]$ or  $[a,b]\tup [c,d]$. 
It is well known and easy to see that 
\begin{equation}
\text{if  $[a,b]$ and $[c,d]$ are transposed intervals, then $\con(a, b)=\con(c, d)$.}
\label{eqtrpnlDkszgcsprGm}
\end{equation}

Next, we note that $\Con(L)$ in \eqref{eqtxtmodBoole} is a Boolean lattice. Oddly enough, we could find neither this fact, nor  \eqref{eqtxtmodBoole} in the literature explicitly. Hence, in this paragraph, we outline briefly a possible way of deriving these statements from
explicitly available and well-known other facts; the reader may skip over this paragraph.
So let $L$ be a finite \emph{distributive} lattice. 
Since $L$ is modular,   $\Con(L)$ is a Boolean lattice by  Gr\"atzer~\cite[Corollary 3.12 in page 41]{grbypict}.
Pick a maximal chain $0\prec a_1\prec\dots\prec a_t=1$ in $L$. 
Here $t$ is the length of $L$, and it is well known that $t=|\Jir(L)|$; see Gr\"atzer~\cite[Corollary 112 in page 114]{ggglt}.
If $\con(a_{i-1},a_{i})=\con(a_{j-1},a_{j})$, then it follows 
from Gr\"atzer~\cite{ggprimeprojective} and distributivity (in fact, modularity)
that there is a sequence  of prime intervals (edges in the diagram) from $[a_{i-1},a_{i}]$ to $[a_{j-1},a_{j}]$ such that any two neighboring intervals in this sequence are transposed. In the terminology of Adaricheva and Cz\'edli~\cite{adarichevaczedli}, the prime intervals $[a_{i-1},a_{i}]$ and $[a_{j-1},a_{j}]$ belong to the same \emph{trajectory}. Since no two distinct comparable prime intervals of $L$ can belong to the same trajectory by 
\cite[Proposition 6.1]{adarichevaczedli}, it follows that $i=j$. 
Hence, the congruences $\con(a_{i-1},a_{i})$, $i\in\set{1,\dots, t}$, are pairwise distinct. They are join-irreducible congruences by Gr\"atzer~\cite[page 213]{ggglt}, whereby they are atoms in $\Con(L)$ since $\Con(L)$ is Boolean. Clearly, $\bigvee_{i=1}^t \con(a_{i-1},a_{i})$ is $1_{\Con(L)}$, which implies that $|\Con(L)|=2^t$. This proves  \eqref{eqtxtmodBoole} since $t=|\Jir(L)|$.

Next, a lattice is called \emph{planar} if it is finite and has a Hasse-diagram that is a planar representation of a graph in the usual sense that any two edges can intersect only at vertices. Let $\nnul$ and $\nplu$ denote the set
$\set{0,1,2,\dots}$ of nonnegative integers and  the set $\set{1,2,3,\dots}$ of positive integers, respectively. In their fundamental paper on planar lattices, Kelly and Rival~\cite{kellyrival} gave a set
\[
\KRlist=\set{A_n, E_n,F_n,G_n,H_n:n\in\nnul}\cup\set{B,C,D}
\]
of finite lattices such that the following statement holds.

\begin{proposition}[{A part of Kelly and Rival~\cite[Theorem~1]{kellyrival}}]
\label{probKRthm}
A finite lattice $L$ is planar if and only if neither $L$, nor its dual  contains some lattice of $\KRlist$ as a subposet.
\end{proposition}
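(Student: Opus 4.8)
The statement is a verbatim fragment of the Kelly--Rival characterization, so a proof proposal amounts to reconstructing their argument, which splits into the two implications of the biconditional. For the forward (necessity) direction I would argue by contraposition within each of the two dual halves: it suffices to show that a planar $L$ contains no member of $\KRlist$ as a subposet, and then to apply the same conclusion to $L^{\textup d}$, whose planarity is equivalent to that of $L$ because reflecting a planar diagram top-to-bottom yields a planar diagram of the dual. The first ingredient is that every lattice in $\KRlist$ is itself non-planar. Since $\KRlist$ consists of the three individual lattices $B,C,D$ together with five one-parameter families $A_n,E_n,F_n,G_n,H_n$, I would verify this uniformly, exhibiting inside the diagram-graph of each (augmented by the edge joining $0$ and $1$, so that Platt's reduction of lattice-planarity to graph-planarity applies) a subdivision of $K_5$ or of $K_{3,3}$.

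The genuine content of necessity is a monotonicity principle: an order-embedding of a poset $P$ into a planar lattice $L$ should be upgradable to a planar drawing of $P$. This is false for arbitrary posets, so here one must exploit the lattice structure of $L$. Given a fixed planar diagram of $L$, each covering edge of the subposet $P$ can be traced by a monotone path in $L$, and the crucial point is that these tracing paths can be chosen pairwise non-crossing except at shared vertices. If this routing could always be carried out, then a planar drawing of $L$ would induce one of $P$; since the members of $\KRlist$ are non-planar, none of them embeds, which gives necessity.

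For the backward (sufficiency) direction I would again argue by contraposition: assuming $L$ is non-planar, I must locate some member of $\KRlist$ as a subposet of $L$ or of $L^{\textup d}$. Invoking Platt's theorem and Kuratowski's theorem, non-planarity of $L$ yields a subdivision of $K_5$ or $K_{3,3}$ in the augmented diagram-graph; the task is to translate this purely graph-theoretic obstruction back into an order-theoretic one. I would first replace the obstruction by a subposet-minimal non-planar configuration, and then run a structural case analysis on how the branch vertices and the monotone connecting paths of the subdivision sit relative to the order, systematically matching each case to one of the finitely many patterns $A_n,\dots,H_n,B,C,D$.

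This last step is where I expect the main difficulty to lie, and it is the long combinatorial heart of Kelly--Rival~\cite{kellyrival}: proving that the list $\KRlist$ is \emph{exhaustive}, i.e.\ that every way in which a lattice can fail to be planar is already witnessed, up to duality, by one of the listed subposets. An alternative route to the same end would be a direct induction on $|L|$ that peels off a suitable doubly irreducible element, keeps a planar drawing of the smaller lattice, and shows that the only local obstructions to extending the drawing across the removed element are exactly the configurations in $\KRlist$; but either way the unavoidable labor is the verification that no obstruction outside the list can occur.
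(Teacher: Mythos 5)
First, note that the paper does not prove this proposition at all: it is stated as a quoted result (a part of Kelly and Rival's Theorem~1) and is only \emph{used} later, so there is no in-paper argument to compare yours against; the appropriate treatment here is a citation, which is what the author gives. Judged on its own terms, your proposal is an outline rather than a proof, and its one concrete novel step has a genuine gap. For necessity, your central claim --- that the monotone paths tracing the covering edges of a subposet $P$ inside a fixed planar diagram of $L$ ``can be chosen pairwise non-crossing except at shared vertices'' --- is exactly the nontrivial point, and you only assert it conditionally (``If this routing could always be carried out''). As stated it is in danger of failing: a covering edge of $P$ corresponds to a possibly long maximal chain of $L$, two such chains can pass through a common intermediate vertex of $L$ that is not a vertex of $P$, and there is no a priori reason a non-crossing rerouting exists. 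Kelly and Rival sidestep this entirely by working with order dimension: a fixed planar diagram induces a left/right relation on incomparable pairs, giving two linear extensions whose intersection is the order, so a planar lattice has dimension at most $2$; dimension is monotone under passing to subposets and invariant under dualization; and each member of $\KRlist$ is verified to have dimension $3$. That is the mechanism your sketch is missing, and it also replaces your proposed $K_5$/$K_{3,3}$ verifications, which would otherwise have to be carried out for five infinite families.

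For sufficiency you candidly defer the entire content --- the exhaustive case analysis showing that every non-planar (equivalently, dimension~$\geq 3$) lattice contains, up to duality, one of the listed subposets --- to ``the long combinatorial heart of Kelly--Rival.'' That is indeed where the labor lies, but deferring it means the proposal is a plan for a proof, not a proof. In the context of this paper nothing more than the citation is needed; if you do want to supply an argument, the necessity half should be rebuilt on the dimension characterization rather than on path routing, and the sufficiency half cannot be waved at.
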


Note that the lattices $A_n$ are selfdual and  Kelly and Rival~\cite{kellyrival} proved the minimality of $\KRlist$, but we do not need these facts.

Next, we prove Remark~\ref{remrPSTk}. The \emph{ordinal sum} of lattices $L'$ and $L''$ is their disjoint union $L'\mathop{\dot\cup} L''$ such that 
for $x,y\in L'\mathop{\dot\cup} L''$, we have that $x\leq y$ if and only if $x\leq_{L'}y$, or $x\leq_{L''}y$, or $x\in L'$ and $y\in L''$. 

\begin{proof}[Proof of Remark~\ref{remrPSTk}] Let $L(8)$ be the eight-element Boolean lattice. Next, for $n>8$, let $L(n)$ be the ordinal sum of $L(8)$ and an $(n-8)$-element chain. 
Since $|\Jir(L_n)|=n-5$,  Remark~\ref{remrPSTk} follows from \eqref{eqtxtmodBoole}.
\end{proof}

Note that $L(n)$ above occurs also in page 93 of Rival~\cite{rivaldismentlable}.

\section{A lemma on subposets that are lattices}
While $\KRlist$ consists of \emph{lattices}, they appear in Proposition~\ref{probKRthm} as \emph{subposets}.  This fact causes some difficulties in proving our theorem; this section serves as a preparation to overcome these difficulties. The set of \emph{join-reducible elements} of a lattice $L$ will be denoted by $\Jred(L)$. Note that 
\begin{equation}
\Jred(L)=L\setminus(\set 0\cup\Jir(L))=\set{a\vee b: a,b\in L\text{ and }a\parallel b},
\label{eqJrdlsmwGjT}
\end{equation}
where $\parallel$ stands for incomparability, that is, $a\parallel b$ is the conjunction of $a\nleq b$ and $b\nleq a$. Similarly, $\Mred(L)=L\setminus(\set 1\cup\Mir(L))$ denotes the set of \emph{meet-reducible elements} of $L$.

\begin{lemma}\label{lemmalatsubPoset}
Let $L$ and $K$ be finite lattices such that $K$ is a subposet of $L$. Then the following four statements and their duals hold.
\begin{enumeratei}
\item\label{lemmalatsubPoseta} If $a_1,\dots,a_t\in K$ and $t\in\nplu$, then $a_1\jl\dots \jl a_t\leq a_1\jk\dots \jk a_t$. 
\item\label{lemmalatsubPosetb} If $t,s\in\nplu$,  $a_1,\dots,a_t, b_1,\dots,b_s\in K$, and $a_1\jk\dots \jk a_t$ is distinct from $b_1\jk\dots \jk b_s$, then  $a_1\jl\dots \jl a_t\neq b_1\jl\dots \jl b_s$.
\item\label{lemmalatsubPosetc} $|\Jred(L)|\geq |\Jred(K)|$ and, dually, $|\Mred(L)|\geq |\Mred(K)|$.
\item\label{lemmalatsubPosetd} If  $|\Jred(L)| = |\Jred(K)|$, $u_1,u_2,v_1,v_2\in K$, $u_1\parallel u_2$, $v_1\parallel v_2$, and $u_1\jk u_2 = v_1\jk v_2$, then  $u_1\jl u_2 = v_1\jl v_2$.
\end{enumeratei}
\end{lemma}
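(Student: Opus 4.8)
The plan is to build everything around a single order-theoretic ``closure'' map. Since $K$ carries the order inherited from $L$, an element $k\in K$ is an $L$-upper bound of a family $a_1,\dots,a_t\in K$ if and only if it is a $K$-upper bound of them; as the $L$-join is the least $L$-upper bound and $a_1\jk\cdots\jk a_t$ is one such $K$-upper bound, part~\eqref{lemmalatsubPoseta} follows at once: $a_1\jl\cdots\jl a_t\leq a_1\jk\cdots\jk a_t$. The real engine is this: for any $x\in L$ that happens to be the $L$-join of a finite nonempty subset of $K$, define $\eta(x):=\min\set{k\in K:k\geq_L x}$. I claim $\eta$ is well defined and that $\eta(a_1\jl\cdots\jl a_t)=a_1\jk\cdots\jk a_t$. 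Indeed, $a_1\jk\cdots\jk a_t$ lies $L$-above $a_1\jl\cdots\jl a_t$ by~\eqref{lemmalatsubPoseta}, so the set in question is nonempty; and any $k\in K$ with $k\geq_L a_1\jl\cdots\jl a_t$ is an $L$-, hence $K$-, upper bound of the $a_i$, so $k\geq_K a_1\jk\cdots\jk a_t$. Thus the minimum exists and equals $a_1\jk\cdots\jk a_t$. In particular the $K$-join of a family is determined by its $L$-join, which is exactly the contrapositive form of part~\eqref{lemmalatsubPosetb}.

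For part~\eqref{lemmalatsubPosetc} I would invert this recipe into an injection $\psi\colon\Jred(K)\to\Jred(L)$. For $c\in\Jred(K)$ set $D_c:=\set{a\in K:a<_K c}$; since $c$ is join-reducible in $K$ we have $\bigvee_K D_c=c$ and $D_c\neq\emptyset$, and I put $\psi(c):=\bigvee_L D_c$. The identity from the previous paragraph gives $\eta(\psi(c))=\bigvee_K D_c=c$, so $\eta\circ\psi=\mathrm{id}_{\Jred(K)}$ and $\psi$ is injective. What remains is to check that $\psi(c)$ genuinely lies in $\Jred(L)$, i.e.\ is neither $0_L$ nor join-irreducible in $L$. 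It is not $0_L$ because $D_c$ contains two incomparable elements, which are necessarily nonzero. If it were join-irreducible in $L$, then by the standard fact that a join-irreducible element of a finite lattice cannot be written as a join of strictly smaller elements, $\psi(c)=\bigvee_L D_c$ would equal one of the elements of $D_c$, so $\psi(c)\in K$; but $\eta(k)=k$ for $k\in K$, whence $c=\eta(\psi(c))=\psi(c)<_K c$, a contradiction. Therefore $\psi(c)\in\Jred(L)$, giving $|\Jred(L)|\geq|\Jred(K)|$; the meet-reducible statement, and more generally each dual assertion, follows by reversing the order and interchanging joins with meets.

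Part~\eqref{lemmalatsubPosetd} is then a short counting argument riding on $\psi$. Under the hypothesis $|\Jred(L)|=|\Jred(K)|$, the injection $\psi$ between these finite sets is a bijection. Given $u_1\parallel u_2$ and $v_1\parallel v_2$ in $K$ with $c:=u_1\jk u_2=v_1\jk v_2$, both $u_1\jl u_2$ and $v_1\jl v_2$ belong to $\Jred(L)$ by~\eqref{eqJrdlsmwGjT}, so by surjectivity each equals $\psi$ of a unique element of $\Jred(K)$; applying $\eta$ and using $\eta(u_1\jl u_2)=u_1\jk u_2=c=v_1\jk v_2=\eta(v_1\jl v_2)$ forces both preimages to be $c$, whence $u_1\jl u_2=\psi(c)=v_1\jl v_2$, as desired. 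The step I expect to be the main obstacle is the verification in part~\eqref{lemmalatsubPosetc} that $\psi(c)$ is join-reducible in $L$: this is the only place where the two join operations interact nontrivially, and it is exactly what makes $\psi$ land in $\Jred(L)$ rather than merely in $L$, which in turn is what powers the bijection used in part~\eqref{lemmalatsubPosetd}.
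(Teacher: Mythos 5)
Your proof is correct and follows essentially the same strategy as the paper: part~(ii) by comparing upper bounds in the two lattices, and parts~(iii)--(iv) by exhibiting an injection $\Jred(K)\to\Jred(L)$ that the cardinality hypothesis upgrades to a bijection. The only substantive difference is that your injection $\psi(c)=\bigvee_L\{a\in K: a<_K c\}$ is canonical, whereas the paper sends a chosen representation $c_i=a_i\jk b_i$ (with $a_i\parallel b_i$) to $a_i\jl b_i$; this costs you the extra---correctly handled---verification that $\psi(c)$ is join-reducible in $L$, which for the paper's map is automatic because a join of two incomparable elements lies in $\Jred(L)$ by definition.
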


Note that, according to  \eqref{lemmalatsubPosetb} and  \eqref{lemmalatsubPosetd}, the distinctness of joins is generally preserved when passing from $K$ to $L$, but equalities are preserved only under additional assumptions. The dual of a condition (X) will be denoted by \tdual{(X)};
for example, the dual of Lemma~\ref{lemmalatsubPoset}\eqref{lemmalatsubPoseta}
is denoted by   Lemma~\ref{lemmalatsubPoset}\tdual{\eqref{lemmalatsubPoseta}} or simply by \ref{lemmalatsubPoset}\tdual{\eqref{lemmalatsubPoseta}}.

\begin{proof}[Proof of Lemma~\ref{lemmalatsubPoset}] Part \eqref{lemmalatsubPoseta} is a trivial consequence of the concept of joins as least upper bounds.

In order to prove \eqref{lemmalatsubPosetb}, assume that 
$a_1\jl\dots \jl a_t = b_1\jl\dots \jl b_s$.
Part \eqref{lemmalatsubPoseta}
gives that $a_i\leq_L a_1\jl\dots \jl a_t = b_1\jl\dots \jl b_s\leq_L b_1\jk\dots \jk b_s$, for all $i\in\set{1,\dots,t}$. Since $K$ is a subposet of $L$, $a_i\leq_K b_1\jk\dots \jk b_s$. But $i\in\set{1,\dots,t}$ is arbitrary, whereby $a_1\jk\dots \jk a_t\leq_K b_1\jk\dots \jk b_s$. We have equality here, since the converse inequality follows in the same way. Thus, we conclude \eqref{lemmalatsubPosetb} by contraposition.

Next, let $\set{c_1,\dots, c_t}$ be a repetition-free list of $\Jred(K)$. For each $i$ in $\set{1,\dots,t}$, pick $a_i,b_i\in K$ such that $a_i\parallel b_i$ and $c_i=a_i\jk b_i$. That is,
\begin{equation}
\Jred(K)=\set{c_1=a_1\jk b_1, \dots,  c_t=a_t\jk b_t}.
\label{eqrNfGhtlTrDrS}
\end{equation}
Since $a_i\parallel b_i$ holds also in $L$,
\begin{equation}
\set{a_1\jl b_1, \dots,  a_t\jl b_t}\subseteq \Jred(L).
\label{eqkjpKkPlDsznORscSj}
\end{equation}
The elements listed in \eqref{eqkjpKkPlDsznORscSj} are pairwise distinct by part \eqref{lemmalatsubPosetb}.
Therefore, $|\Jred(K)|=t\leq |\Jred(L)|$, proving part \eqref{lemmalatsubPosetc}.

Finally, to prove part \eqref{lemmalatsubPosetd}, we assume the premise of part \eqref{lemmalatsubPosetd}, and we let $t:=|\Jred(K)| = |\Jred(L)|$. Choose $c_i,a_i,b_i\in K$ as in \eqref{eqrNfGhtlTrDrS}. 
Since $t=|\Jred(L)|$,  part  \eqref{lemmalatsubPosetb}  and \eqref{eqkjpKkPlDsznORscSj} give that 
\begin{equation}
\Jred(L)=\set{a_1\jl b_1, \dots,  a_t\jl b_t}.
\label{eqZhfTrNwmS}
\end{equation}
As a part of the premise of \eqref{lemmalatsubPosetd},
 $u_1\parallel u_2$ has been assumed. Hence, 
\eqref{eqrNfGhtlTrDrS} yields  a unique subscript $i\in\set{1,\dots,t}$ such that 
$c_i=u_1\jk u_2 = v_1\jk v_2$. Since $c_1,\dots, c_t$ is a repetition-free list of the elements of $\Jred(K)$, we have that 
$u_1\jk u_2 \neq c_j=a_j\jk b_j$ for  every $j\in \set{1,\dots,t}\setminus\set i$. 
So, for all $j\neq i$, part \eqref{lemmalatsubPosetb} gives that $u_1\jl u_2 \neq a_j\jl b_j$.  But $u_1\jl u_2\in \Jred(L)$, whence 
\eqref{eqZhfTrNwmS} gives that 
$u_1\jl u_2 =a_i\jl b_i$. Since the equality $v_1\jl v_2 =a_i\jl b_i$ follows in the same way, we conclude that $u_1\jl u_2=v_1\jl v_2$, as required. 
This yields part \eqref{lemmalatsubPosetd} and completes the proof of Lemma~\ref{lemmalatsubPoset}.
\end{proof}

Note that  $a_i\jl b_i$ in the proof above can be distinct from $c_i$; this will be exemplified by Figures~\ref{figFnul} and \ref{figEnul}.

\section{The rest of the proof}
In this section, to ease our terminology, let us agree on the following convention. We say that a finite lattice $L$ has \emph{many congruences} if $|\Con(L)|>2^{|L|-5}$. Otherwise, if $|\Con(L)|\leq 2^{|L|-5}$, then we say that $L$ has \emph{few congruences}. 

\begin{lemma}\label{lemmangyvghrmgy} For every  finite lattice $L$, the following two assertions holds.
\begin{enumeratei}
\item\label{lemmangyvghrmgya} If $|\Jred(L)|\geq 4$ or $|\Mred(L)|\geq 4$, then $L$ has few congruences.
\item\label{lemmangyvghrmgyb}  If $|\Jred(L)|=3$ and there are $p,q\in \Jir(L)$ such that $p\neq q$ and $\con(\lcov p,p)=\con(\lcov q,q)$, then $L$ has few congruences.
\end{enumeratei}
\end{lemma}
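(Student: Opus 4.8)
The plan is to connect the combinatorial quantities $|\Jred(L)|$ and $|\Mred(L)|$ to the bound $|\Con(L)|\le 2^{|\Qu(L)|}\le 2^{|\Jir(L)|}$ from \eqref{eqFJNinequality}, and then exploit the crude partition $L=\set 0\cup\Jir(L)\cup\Jred(L)$ together with its dual. The key identity to keep in view is that, writing $n=|L|$, we have $|\Jir(L)|=n-1-|\Jred(L)|$ from \eqref{eqJrdlsmwGjT} (since $0$ and the join-irreducibles are precisely the non-join-reducible elements, and $0\notin\Jir(L)$). Thus $|\Jred(L)|\ge 4$ immediately gives $|\Jir(L)|\le n-5$, so \eqref{eqFJNinequality} yields $|\Con(L)|\le 2^{n-5}$, i.e.\ few congruences. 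The meet-reducible case is handled by the dual of \eqref{eqFJNinequality}, using $|\Mir(L)|=n-1-|\Mred(L)|$ and the fact that $\Con(L)\cong\Con(L^{\mathrm{dual}})$. This settles part \eqref{lemmangyvghrmgya}.

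For part \eqref{lemmangyvghrmgyb}, the hypothesis $|\Jred(L)|=3$ gives $|\Jir(L)|=n-4$, so the naive bound only gives $|\Con(L)|\le 2^{n-4}$, which is too weak by a factor of $2$. The extra hypothesis is designed to save exactly this factor: the existence of distinct $p,q\in\Jir(L)$ with $\con(\lcov p,p)=\con(\lcov q,q)$ means precisely that $p\cgeq q$, so $p$ and $q$ lie in the same block of $\cgeq$ and hence $|\Qu(L)|=|\Jir(L)/\mathord{\cgeq}|\le |\Jir(L)|-1=n-5$. Feeding this into the \emph{first} inequality of \eqref{eqFJNinequality}, namely $|\Con(L)|\le 2^{|\Qu(L)|}$, gives $|\Con(L)|\le 2^{n-5}$, which is exactly ``few congruences.''

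So the whole lemma reduces to two short bookkeeping arguments built on \eqref{eqJrdlsmwGjT}, \eqref{eqFJNinequality}, and the definition of $\Qu(L)$ in \eqref{eqQdFnlM}. The main point I would be careful about is not a genuine obstacle but rather making sure the counting is airtight: I must justify that $L\setminus\set 0$ is the disjoint union of $\Jir(L)$ and $\Jred(L)$ (which is exactly what \eqref{eqJrdlsmwGjT} records), and I should note $n\ge 1$ so that removing $0$ is legitimate; for part \eqref{lemmangyvghrmgyb} the quotient step needs $p\ne q$ so that two distinct elements genuinely collapse, which is explicitly assumed. I would also remark that the meet-side statements follow by applying the join-side arguments to the dual lattice, invoking that $|\Con(L)|$ is self-dual, so no separate computation is needed. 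I expect the only delicate wording to be the clean statement that $|\Qu(L)|$ drops by at least one when a single $\cgeq$-collision is present; everything else is immediate from the cited inequalities.
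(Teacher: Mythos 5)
Your proof is correct and follows essentially the same route as the paper: part (i) via the count $|\Jir(L)|=n-1-|\Jred(L)|$ from \eqref{eqJrdlsmwGjT} combined with \eqref{eqFJNinequality} (and duality for the meet side), and part (ii) via the observation that the hypothesis means $p\cgeq q$, so $|\Qu(L)|\leq|\Jir(L)|-1=n-5$, fed into the first inequality of \eqref{eqFJNinequality}. No gaps; the extra bookkeeping remarks you flag are exactly the points the paper also relies on.
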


\begin{proof} Let $n:=|L|$.
If $|\Jred(L)|\geq 4$, then \eqref{eqJrdlsmwGjT} leads to $|\Jir(L)|\leq n-5$, and it follows by \eqref{eqFJNinequality} that $L$ has few congruences. By duality, this proves part~\eqref{lemmangyvghrmgya}. Under the assumptions of  \eqref{lemmangyvghrmgyb}, $p\cgeq q$, and we obtain from \eqref{eqQdFnlM} that 
$|\Qu(L)|\leq |\Jir(L)|-1 = n-4 - 1=n-5$, and \eqref{eqFJNinequality} implies again that $L$ has few congruences. This proves the lemma.
\end{proof}

\begin{figure}[htb]
\centerline{\includegraphics[scale=1.0]{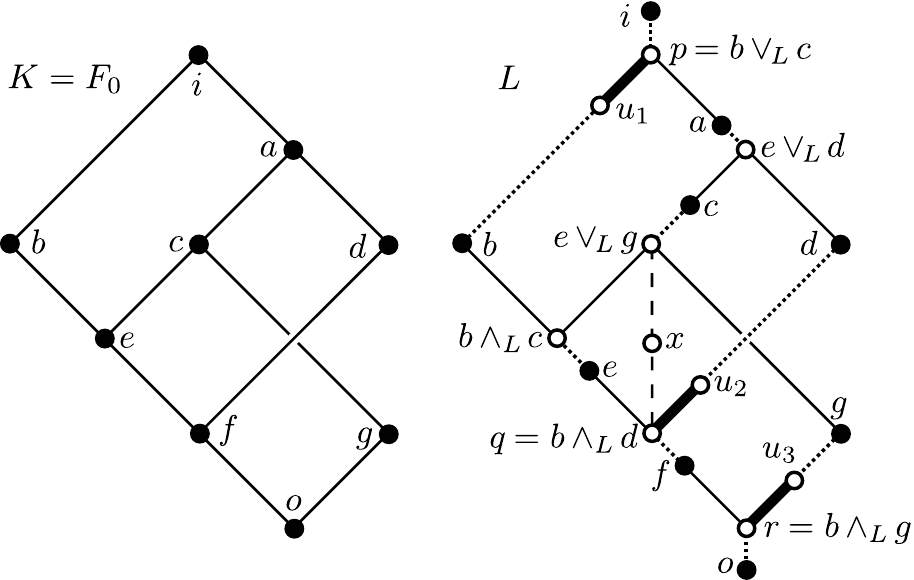}}
\caption{$K=F_0$ and an example for $L$ containing $K$ as a subposet\label{figFnul}}
\end{figure}

\begin{lemma}\label{lemmaFnul} Let $K:=F_0\in\KRlist$, see on the left in  Figure~\ref{figFnul}. 
If $K$ is a subposet of a finite lattice $L$, then $L$ has few congruences.
\end{lemma}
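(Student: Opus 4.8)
The plan is to use Lemma~\ref{lemmalatsubPoset} to transfer structural information about the fixed small lattice $K=F_0$ up to $L$, and then combine it with Lemma~\ref{lemmangyvghrmgy} to conclude that $L$ has few congruences. First I would read off the combinatorics of $K=F_0$ from the left-hand diagram in Figure~\ref{figFnul}: I need to record $|\Jred(K)|$ (the number of join-reducible elements of $K$) and, if this number happens to be exactly $3$, the existence of a pair of distinct join-irreducibles generating the same congruence. Since Lemma~\ref{lemmangyvghrmgy}\eqref{lemmangyvghrmgya} already finishes the job whenever $|\Jred(L)|\geq 4$ (or dually $|\Mred(L)|\geq 4$), the role of Lemma~\ref{lemmalatsubPoset}\eqref{lemmalatsubPosetc} is precisely to push a lower bound on $|\Jred(K)|$ up to $|\Jred(L)|$.

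The main case split I anticipate is on the value of $|\Jred(L)|$. By Lemma~\ref{lemmalatsubPoset}\eqref{lemmalatsubPosetc} we have $|\Jred(L)|\geq|\Jred(K)|$, so if $|\Jred(F_0)|\geq 4$ we are immediately done by Lemma~\ref{lemmangyvghrmgy}\eqref{lemmangyvghrmgya}. If instead $|\Jred(F_0)|=3$, then either $|\Jred(L)|\geq 4$ (again done by \eqref{lemmangyvghrmgya}), or $|\Jred(L)|=3$, which is the genuinely interesting case. In that remaining case the hypothesis $|\Jred(L)|=|\Jred(K)|$ of Lemma~\ref{lemmalatsubPoset}\eqref{lemmalatsubPosetd} is satisfied, so I can use part~\eqref{lemmalatsubPosetd} to transport an equality of joins from $K$ up to $L$. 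Concretely, if $F_0$ contains elements $u_1\parallel u_2$ and $v_1\parallel v_2$ with $u_1\jk u_2=v_1\jk v_2$ coming from two distinct ways of realizing the same join-reducible element, then \eqref{lemmalatsubPosetd} gives $u_1\jl u_2=v_1\jl v_2$ in $L$.

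From such a coincidence of joins in $L$ I would then extract two distinct join-irreducibles $p,q\in\Jir(L)$ with $\con(\lcov p,p)=\con(\lcov q,q)$, so that Lemma~\ref{lemmangyvghrmgy}\eqref{lemmangyvghrmgyb} applies and forces $L$ to have few congruences. The congruence coincidence should follow from \eqref{eqtrpnlDkszgcsprGm}: the special geometry of $F_0$ forces certain prime intervals at distinct join-irreducibles to be transposed (or connected by a chain of transpositions) once the join equality is imposed, so that they collapse the same congruence. Throughout, I would keep in mind the dual reading, since Lemma~\ref{lemmalatsubPoset} guarantees all four statements \emph{and their duals}; if the join-side argument does not directly produce the needed configuration, the meet-side (using $|\Mred(L)|$ and \tdual{\eqref{lemmalatsubPosetd}}) is available as a symmetric fallback.

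I expect the main obstacle to be the step that converts the transported join-equality $u_1\jl u_2=v_1\jl v_2$ into an \emph{equality of congruences} $\con(\lcov p,p)=\con(\lcov q,q)$ for two \emph{distinct} join-irreducibles $p,q$ of $L$. The subtlety, already flagged in the remark after the proof of Lemma~\ref{lemmalatsubPoset}, is that the element $a_i\jl b_i$ realized in $L$ need not coincide with its counterpart $c_i$ in $K$, so I cannot naively identify elements of $K$ with their images; I must argue entirely in terms of the covering and transposition structure that $F_0$ imposes on any lattice containing it as a subposet. Verifying that this structure genuinely yields \emph{two} distinct generators $p\neq q$ (rather than the same one counted twice) and that the requisite intervals are transposed is where the real work lies, and it will presumably lean on examining the specific shape of $F_0$ in Figure~\ref{figFnul} case by case.
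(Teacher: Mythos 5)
Your high-level architecture does match the paper's: reduce via Lemma~\ref{lemmalatsubPoset}\eqref{lemmalatsubPosetc} and Lemma~\ref{lemmangyvghrmgy}\eqref{lemmangyvghrmgya} to the case $|\Jred(L)|=|\Mred(L)|=3$, then produce two \emph{distinct} elements of $\Jir(L)$ whose covering intervals generate the same congruence via \eqref{eqtrpnlDkszgcsprGm}, and close with Lemma~\ref{lemmangyvghrmgy}\eqref{lemmangyvghrmgyb}. However, there is a genuine gap: everything that actually constitutes the proof is deferred. You yourself flag the conversion of structural data into the congruence coincidence as ``where the real work lies,'' and that work is not done. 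Concretely, the paper must (a) pin down $\Jred(L)=\set{b\jl c,\ e\jl d,\ e\jl g}$ and $\Mred(L)=\set{b\ml d,\ b\ml g,\ b\ml c}$ exactly; (b) choose a lower cover $u_1$ of $p:=b\jl c$ inside $[b,p]_L$ and upper covers $u_2$, $u_3$ of $q:=b\ml d$ and $r:=b\ml g$ inside $[q,d]_L$ and $[r,g]_L$; (c) prove $u_1,u_2,u_3\in\Jir(L)$ by excluding, one by one, each of the three listed join-reducible elements using the order relations of $F_0$; (d) prove the transpositions $[q,u_2]\tup[u_1,p]\tdn[r,u_3]$, which requires a nontrivial maximal-chain argument showing $[b,u_1]_L\subseteq\Jir(L)$ and hence $u_2\nleq u_1$, $u_3\nleq u_1$; and (e) prove $u_2\neq u_3$. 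None of these steps appears in your proposal, and they are exactly the content of the lemma.

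Moreover, the one concrete mechanism you do propose points the wrong way for $F_0$. You plan to invoke Lemma~\ref{lemmalatsubPoset}\eqref{lemmalatsubPosetd} to transport a coincidence of joins $u_1\jk u_2=v_1\jk v_2$ from $K$ to $L$; but the three join-reducible elements of $F_0$ arise from joins that are already pairwise distinct in $K$, so part~\eqref{lemmalatsubPosetd} contributes nothing here beyond justifying the exact determination of $\Jred(L)$ and $\Mred(L)$. (A join-coincidence such as $a\jk b=b\jk c$ is the engine of the \emph{other} lemma, for $K=E_0$.) For $F_0$ the congruence coincidence comes not from a transported equality of joins but from the geometric fact that the covering intervals over two distinct meet-reducible elements both transpose up to the single interval $[u_1,p]$. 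So the proposal is a correct roadmap to the right destination, but it neither travels there nor identifies the correct vehicle.
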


\begin{proof}
Label the elements of $K=F_0$ as shown in Figure~\ref{figFnul}. A possible $L$ is given on the right in Figure~\ref{figFnul}; the elements of $K$ are black-filled. The diagram of $L$ is understood as follows: for  $y_1, y_2\in L$, a \emph{thick solid edge}, a \emph{thin solid edge}, and a \emph{thin dotted edge} ascending from $y_1$ to $y_2$ mean that, in the general case, we know that  $y_1\prec y_2$, $y_1< y_2$, and $y_1\leq y_2$, respectively.  In a concrete situation, further relations can be fulfilled; for example, a thin dotted edge can happen to denote that $y_1=y_2$. 
The two dashed edges and the element $x$ as well as similar edges and elements can be present but they can also be  missing.
Note that $y_1\leq y_2$ is understood as $y_1\leq_L y_2$; for $y_1,y_2\in K$, this is the same as $y_1\leq_K y_2$ since $K$ is a subposet of $L$.
Since $L$ in the figure carries a lot of information on the general case, the reader may choose to inspect Figure~\ref{figFnul} instead of checking some of our computations that will come later. 
Note also that the convention above applies only for $L$; for $K$, every edge stands for covering.

Clearly, $|\Jred(K)|=|\Mred(K)|=3$. Hence,  Lemma~\ref{lemmalatsubPoset}\eqref{lemmalatsubPosetc} gives that  $|\Jred(L)|\geq 3$ and $|\Mred(L)|\geq 3$.
We can assume that none of $|\Jred(L)|\geq 4$ and $|\Mred(L)|\geq 4$ holds, because otherwise Lemma~\ref{lemmangyvghrmgy}\eqref{lemmangyvghrmgya} would immediately complete the proof. Hence, 
\begin{equation}
 |\Jred(L)| = 3\quad\text{and}\quad|\Mred(L)| = 3.
\label{eqtzhGrhBhWxm}
\end{equation}
Since $|\Jred(K)|=|\Mred(K)|=3$ holds also for $K=E_0$, to be  given later in Figure~\ref{figEnul}, note at this point that \eqref{eqtzhGrhBhWxm} will be valid in the proof of Lemma~\ref{lemmaEnul}.
Let $p:=b\jl c\in L$, and let $u_1\in L$ be a lower cover of $p$ in the interval $[b,p]_L$. Also, let $q:=b\ml d$ and let $u_2\in [q,d]_L$ be a cover of $q$. Finally, let $r:=b\ml g$, and let $u_3\in [r,g]_L$ be a cover of $r$. 
Since we have formed the joins and the meets of incomparable elements in $L$ such that the corresponding joins are pairwise distinct in $K$ and the same holds for the meets, \eqref{eqtzhGrhBhWxm} and Lemma~\ref{lemmalatsubPoset} imply that
\begin{equation}
\Jred(L) = \set{p, e\jl d , e\jl g, }\quad\text{and}\quad \Mred(L) = \set{q,r,b\ml c}.
\label{eqpRpzskPkNvl}
\end{equation}
Since $u_1\prec_L p$, $u_1\neq p$. If we had that $u_1= e\jl d$, then 
\[b\leq u_1  =  e\jl d  \txtleq{\ref{lemmalatsubPoset}\eqref{lemmalatsubPoseta}} e\jk d=a
\] 
would contradict $b\nleq_K a$. Replacing $\pair d a$ by $\pair g c$, we obtain similarly that $u_1\neq e\jl g$.  Hence, \eqref{eqpRpzskPkNvl} gives that  $u_1\in \Jir (L)$. If we had that $u_2=p$, then $b\leq p=u_2\leq d$ would be a contradiction. Similarly, $u_2=e\jl d$ would lead to $e\leq e\jl d=u_2\leq d$ while $u_2=e\jl g$ again to $e\leq e\jl g=u_2\leq d$, which are contradictions. Hence, $u_2\notin \Jred(L)$ and so $0_L \leq q \prec_L u_2$ gives that $u_2\in \Jir (L)$.  We have that $u_3\neq p$, because otherwise 
$b\leq p=u_3\leq g$ would be a contradiction. Similarly, 
$u_3=e\jl d$ and $u_3\leq e\jl g$ would lead to the contradictions
$e\leq e\jl d =u_3\leq g$ and $e\leq e\jl g =u_3\leq g$, respectively. 
So, $u_3\notin \Jred(L)$ by \eqref{eqpRpzskPkNvl}. Since $r\prec_L u_3$ excludes
that $u_3=0$, we obtain that $u_3\in \Jir(L)$. 
Since $u_3=u_2$ would lead to 
\begin{equation}
f \txtleq{\ref{lemmalatsubPoset}\tdual{\eqref{lemmalatsubPoseta}} } q \leq u_2=u_3\leq g,
\end{equation}
which is a contradiction, we have that 
\begin{equation}
u_1,u_2,u_3\in \Jir(L)\quad\text{and}\quad u_2\neq u_3.
\label{eqczhGstrHnb}
\end{equation}
%


Next, we claim that 
\begin{equation}
[q,u_2]\tup [u_1,p]\quad\text{and}\quad [u_1,p]\tdn [
r,u_3].
\label{eqbRzspszmkZhgnMt}
\end{equation}
Since  $b\nleq a$, $b\nleq c$,  and $b\nleq d$, none of $e\jl d$, $e\jl g$, and $u_2$ belongs to $[b,i]_L$.  
In particular, we obtain from $u_1\prec p$ and \eqref{eqpRpzskPkNvl} that
\begin{equation}
[b,u_1]_L\subseteq \Jir (L)\quad\text{and}\quad b\nleq u_2. 
\label{eqmdcMrTblTn}
\end{equation}
Suppose, for a contradiction, that $u_2\leq u_1$, and pick a maximal chain in the interval $[u_2,u_1]$. So we pick a lower cover of $u_1$, then a lower cover of the previous lower cover, etc., and it follows from  \eqref{eqmdcMrTblTn} that this chain contains $b$. Hence, $u_2\leq b$, and we obtain that $q\prec_L u_2\leq b\ml d = q$, a contradiction. Hence, $u_2\nleq u_1$. This means that $u_1\ml u_2< u_2$. But $q\leq b\leq u_1$, so we have that 
$q\leq u_1\wedge u_2< u_2$. Since $q\prec u_2$, we obtain that $u_1\ml u_2=q$. 
Similarly, $u_2\leq d\leq p$ and $u_2\nleq u_1$ give that
$u_1<u_1\jl u_2 \leq p$, whereby $u_1\prec_L p$ yields that $u_1\jl u_2=p$. 
The last two equalities imply the first half of \eqref{eqbRzspszmkZhgnMt}. 
The second half follows basically in the same way, so we give less details. 
Based on \eqref{eqmdcMrTblTn}, $u_3\leq u_1$ would lead to $u_3\leq b$ and 
$r\prec_L u_3\leq b\ml g =r$, whence $u_3\nleq u_1$. Since
$u_3\leq  g \leq c \leq b\jl c = p$, $r=b\ml g \leq b\leq u_1$, we obtain that
$r\leq u_1\ml u_3< u_3$ and  $u_1<u_1\jl u_3 \leq p$. Hence the covering relations $r\prec_L u_3$ and  $u_1\prec_L p$ imply the second half of \eqref{eqbRzspszmkZhgnMt}.

Finally, \eqref{eqtrpnlDkszgcsprGm} and  \eqref{eqbRzspszmkZhgnMt} give that 
$\con(q,u_2)=\con(u_1,p)=\con(r,u_3)$. Since  \eqref{eqczhGstrHnb} allows us to replace  $q$ and $r$  by $\lcov u_2$ and $\lcov u_3$, respectively, we obtain that $\con(\lcov u_2,u_2)=\con(\lcov u_3,u_3)$. But $u_2$ and $u_3$ are distinct elements of $\Jir(L)$ by \eqref{eqczhGstrHnb},  whereby \eqref{eqtzhGrhBhWxm} and Lemma~\ref{lemmangyvghrmgy}\eqref{lemmangyvghrmgyb} imply that $L$ has few congruences, as required. This completes the proof of Lemma~\ref{lemmaFnul}.
\end{proof}

We still need another lemma.

\begin{figure}[htb]
\centerline{\includegraphics[scale=1.0]{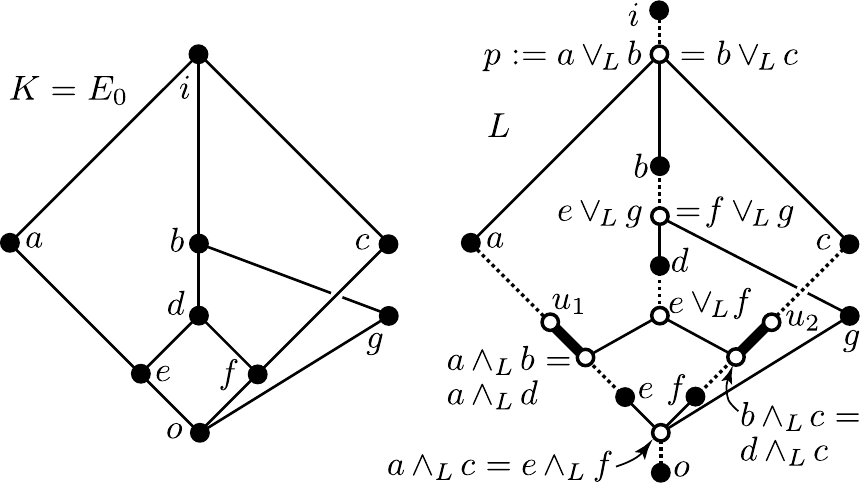}}
\caption{$K=E_0$ and an example for $L$ containing $K$ as a subposet\label{figEnul}}
\end{figure}

\begin{lemma}\label{lemmaEnul} Let $K:=E_0\in\KRlist$, see on the left in  Figure~\ref{figEnul}. 
If $K$ is a subposet of a finite lattice $L$, then $L$ has few congruences.
\end{lemma}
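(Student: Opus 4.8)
The plan is to mimic the structure of the proof of Lemma~\ref{lemmaFnul} as closely as possible, since $E_0$ also satisfies $|\Jred(K)|=|\Mred(K)|=3$. First I would label the elements of $K=E_0$ as in Figure~\ref{figEnul} and, exactly as in the previous proof, reduce to the case
\begin{equation*}
|\Jred(L)|=3\quad\text{and}\quad|\Mred(L)|=3,
\end{equation*}
since otherwise Lemma~\ref{lemmangyvghrmgy}\eqref{lemmangyvghrmgya} finishes immediately; this is why the remark after \eqref{eqtzhGrhBhWxm} already pointed out that the identity there applies here too. The underlying strategy is unchanged: I want to produce two \emph{distinct} doubly-irreducible (or at least join-irreducible) elements $u_2,u_3\in\Jir(L)$ together with a chain of transposed intervals linking $\con(\lcov{u_2},u_2)$ to $\con(\lcov{u_3},u_3)$, so that Lemma~\ref{lemmangyvghrmgy}\eqref{lemmangyvghrmgyb} applies and forces few congruences.

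Next I would form the relevant joins and meets of incomparable pairs in $K$, lift them to $L$, and use Lemma~\ref{lemmalatsubPoset}\eqref{lemmalatsubPosetb}, \eqref{lemmalatsubPosetc}, and \eqref{lemmalatsubPosetd} to obtain explicit descriptions
\begin{equation*}
\Jred(L)=\{\dots\}\quad\text{and}\quad\Mred(L)=\{\dots\}
\end{equation*}
analogous to \eqref{eqpRpzskPkNvl}, with the three join-reducibles and the three meet-reducibles named by joins and meets taken in $L$. The point of having $|\Jred(L)|=|\Jred(K)|$ and its dual is that part \eqref{lemmalatsubPosetd} then guarantees that distinct joins in $K$ stay distinct in $L$ and that coincidences are controlled, which is precisely what lets me identify covers of chosen meet-reducibles as honest join-irreducible elements. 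Concretely, for suitable elements I would define $p$ as a join, pick a lower cover $u_1$ of $p$ inside an interval $[\cdot,p]_L$, and define $u_2,u_3$ as covers of two meet-reducibles in the appropriate intervals, then rule out each ``bad'' identification $u_i=\text{(some join-reducible)}$ by deriving an order relation that contradicts one of the incomparabilities built into $E_0$, exactly as the $b\nleq a$, $b\nleq c$, $b\nleq d$ arguments did for $F_0$.

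The crux is to establish the transposition chain, the $E_0$-analogue of \eqref{eqbRzspszmkZhgnMt}, namely
\begin{equation*}
[\,\cdot\,,u_2]\tup[u_1,p]\quad\text{and}\quad[u_1,p]\tdn[\,\cdot\,,u_3].
\end{equation*}
Each transposition reduces to showing a meet equals the lower endpoint and a join equals $p$; the meet computations rest on facts like $[b,u_1]_L\subseteq\Jir(L)$ (so that any lower cover of $u_1$ reached by descending a maximal chain must pass through the base element), and the join computations rest on $u_i\leq p$ together with $u_1\prec_L p$. Once this chain is in place, \eqref{eqtrpnlDkszgcsprGm} yields $\con(\lcov{u_2},u_2)=\con(\lcov{u_3},u_3)$ with $u_2\neq u_3$ in $\Jir(L)$, and Lemma~\ref{lemmangyvghrmgy}\eqref{lemmangyvghrmgyb} completes the proof.

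I expect the main obstacle to be that the comparability pattern of $E_0$ differs from $F_0$, so the specific incomparabilities available for the ``$u_i\neq$ join-reducible'' exclusions and for forcing the two transpositions will not be literally the same relations; I would have to read off from Figure~\ref{figEnul} the correct triple of incomparable pairs whose joins and meets generate $\Jred$ and $\Mred$, and verify that some element plays the structural role that $b$ played in $F_0$ (lying below the chosen covers and forcing the meets down). If $E_0$ lacks a single element performing all these roles simultaneously, the argument may need two separate ``pivot'' elements or a slightly longer transposition chain, but the logical skeleton, ending in Lemma~\ref{lemmangyvghrmgy}\eqref{lemmangyvghrmgyb}, should carry over.
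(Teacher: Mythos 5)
Your overall strategy coincides with the paper's: reduce to \eqref{eqtzhGrhBhWxm} using Lemma~\ref{lemmangyvghrmgy}\eqref{lemmangyvghrmgya}, compute $\Jred(L)$ and $\Mred(L)$ explicitly via Lemma~\ref{lemmalatsubPoset}, produce two distinct join-irreducible elements whose principal congruences are linked by transposed intervals, and finish with Lemma~\ref{lemmangyvghrmgy}\eqref{lemmangyvghrmgyb}. The gap is exactly the point you flag yourself: you never commit to which elements of $E_0$ supply the incomparable pairs, which covers serve as the two join-irreducibles, or which intervals transpose; and your default guess --- an up--down chain $[\,\cdot\,,u_2]\tup[u_1,p]\tdn[\,\cdot\,,u_3]$ through a lower cover $u_1$ of $p$, copied from \eqref{eqbRzspszmkZhgnMt} --- is not the configuration $E_0$ actually supports. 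What works is the variant you only mention as a fallback. In $E_0$ one has $p:=a\jl b=b\jl c$ and the three meet-reducibles $a\ml b=a\ml d$, $b\ml c=d\ml c$, $a\ml c=e\ml f$; one takes $u_1$ a cover of $a\ml b$ inside $[a\ml b,a]$ and $u_2$ a cover of $b\ml c$ inside $[b\ml c,c]$, shows that $[a\ml b,a]\setminus\set{a\ml b}$ and $[a,p]\setminus\set{p}$ consist of doubly irreducible elements, and then \emph{both} $[a\ml b,u_1]$ and $[b\ml c,u_2]$ transpose \emph{up} to the same interval $[b,p]$, so that $\con(\lcov{u_1},u_1)=\con(b,p)=\con(\lcov{u_2},u_2)$; no lower cover of $p$ and no down-transposition is needed, and the two relevant join-irreducibles are $u_1$ and $u_2$ themselves rather than a pair $u_2,u_3$ flanking a pivot. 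So the skeleton is right and the paper confirms it carries over, but the $E_0$-specific instantiation --- which is the only content of this lemma beyond Lemma~\ref{lemmaFnul} --- is left undone, so the proposal as written is a plan rather than a verifiable proof.
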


\begin{proof} This proof shows a lot of similarities with the earlier proof. In particular, the same convention applies for the diagram of $L$ in Figure~\ref{figEnul} and, again, there can be several elements of $L$ not indicated in the diagram. 
We have already noted that  \eqref{eqtzhGrhBhWxm} holds in the present situation. Figure~\ref{figEnul} shows how to pick $u_1,u_2\in L$; they are covers of $a\ml b$ in $[a\ml b,a]$ and $b\ml c$ in $[b\ml c,c]$, respectively. As a counterpart of \eqref{eqpRpzskPkNvl}, now we obtain in the same way from 
 \eqref{eqtzhGrhBhWxm} and Lemma~\ref{lemmalatsubPoset} that
\begin{equation}
\left.
\begin{aligned}
\Jred(L) &= \set{p:=a\jl b=b\jl c,\, e\jl f,\, e\jl g=f\jl g}\text{ and}\cr 
\Mred(L) &= \set{a\ml b=a\ml d,\, b\ml c=d\ml c,\, a\ml c=e\ml f}.
\end{aligned}
\rskip\right\}
\label{eqpRpzshTplcsNq}
\end{equation}
Not all the equalities above will be used but they justify Figure~\ref{figEnul}. In particular, even if there can be more elements, the comparabilities and the incomparabilities in the figure are correctly indicated. 
Using \eqref{eqpRpzshTplcsNq} in the same way as we used  \eqref{eqpRpzskPkNvl} in the proof of Lemma~\ref{lemmaFnul} and the above-mentioned correctness of  Figure~\ref{figEnul}, it follows that 
\begin{equation}
[a\ml b,a]\setminus\set{a\ml b}\subseteq \Dir(L),\text{ and }
[a,p]\setminus\set{p}\subseteq \Dir(L).
\label{eqnnSnNszGhK}
\end{equation}
In particular, $u_1\in \Jir(L)$. Similarly to the argument verifying \eqref{eqbRzspszmkZhgnMt}, now \eqref{eqnnSnNszGhK} implies that $[\lcov u_1,u_1]=[a\ml b,u_1]\tup [b,p]$.
Since $\pair a {u_1}$ and $\pair c {u_2}$ play symmetric roles,
we obtain that $u_2\in \Jir(L)$ and  $[\lcov u_2,u_2]=[b\ml c,u_2]\tup [b,p]$. Hence, \eqref{eqtrpnlDkszgcsprGm} gives that 
$\con(\lcov u_1,u_1)=\con(b,p)=\con(\lcov u_2,u_2)$. Since $u_1$ and $u_2$ are distinct by Figure~\ref{figEnul} and they belong to 
$\Jir(L)$ by \eqref{eqnnSnNszGhK} and the $\pair a {u_1}$--$\pair c {u_2}$-symmetry, \eqref{eqtzhGrhBhWxm} and Lemma~\ref{lemmangyvghrmgy}\eqref{lemmangyvghrmgyb} imply that $L$ has few congruences. This completes the proof of Lemma~\ref{lemmaEnul}.
\end{proof}

Now, we are in the position to prove our theorem.

\begin{proof}[Proof of Theorem~\ref{thmmain}]
Let $L$ be an arbitrary non-planar lattice; it suffices to show that $L$ has few congruences. By Proposition~\ref{probKRthm}, there is a lattice $K$ in Kelly and Rival's list $\KRlist$ such that $K$ is a subposet of $L$ or the dual $\mdual L$ of $L$. Since $\Con(\mdual L)=\Con(L)$, we can assume that $K$ is a subposet of $L$. 
A quick glance at the lattices of $\KRlist$, see their diagrams in Kelly and Rival~\cite{kellyrival}, shows that if $K\in\KRlist\setminus\set{E_0,F_0}$, then $|\Jir(K)|\geq 4$ or $|\Mir(K)|\geq 4$. Hence, if $K\in\KRlist\setminus\set{E_0,F_0}$, then Lemma~\ref{lemmangyvghrmgy}\eqref{lemmangyvghrmgya} implies that $L$ has few congruences, as required. If $K\in \set{E_0,F_0}$, then the same conclusion is obtained by Lemmas~\ref{lemmaFnul} and \ref{lemmaEnul}. This completes the proof of Theorem~\ref{thmmain}.
\end{proof}


\begin{thebibliography}{99}


\bibitem{adarichevaczedli}
 Adaricheva, K., Cz\'edli, G.:
 Note on the description of join-distributive lattices by
permutations.
 Algebra Universalis \tbf{72}, 155-162 (2014) 

\bibitem{czglatmanycongr}
  Cz\'edli, G.:
  A note on finite lattices with many congruences. 
  Acta Universitatis Matthiae Belii, Series
Mathematics, Online , 22--28,   
 \url{ http://actamath.savbb.sk/pdf/oacta2018003.pdf} 
(2018)

\bibitem{czgsemilatmanyc}
  Cz\'edli, G.:
  Finite semilattices with many congruences.
  \url{https://doi.org/10.1007/s11083-018-9464-5} (2018)
 

\bibitem{freesecomplat}
  Freese, R.:
  Computing congruence lattices of finite lattices. 
  Proc. Amer. Math. Soc. \tbf{125},  3457--3463 (1997)

\bibitem{freesejezeknation}
 Freese, R., Je\v zek, J.,  Nation, J. B.:
 Free lattices. Mathematical Surveys and
Monographs \tbf{42}, American Mathematical Society, Providence, RI, (1995)




\bibitem{grbypict}
   Gr\"atzer, G.:
   The Congruences of a Finite Lattice. A Proof-by-picture Approach.
   Birkh\"auser/Springer, Cham (2016)


\bibitem{ggglt}
   Gr\"atzer, G.: 
  Lattice Theory: Foundation. Birkh\"auser Verlag, Basel (2011)



\bibitem{ggtechnicallemma}
   Gr\"atzer, G.:
   A technical lemma for congruences of finite lattices.
   Algebra Universalis \tbf{72}, 53--55 (2014)

\bibitem{ggprimeprojective} 
   Gr\"atzer, G.: 
  Congruences and prime-perspectivities in finite lattices. Algebra Universalis \tbf{74}, 351--359 (2015)

\bibitem{kellyrival}
  Kelly, D., Rival, I.: 
  Planar lattices. 
  Canad. J. Math. \tbf{27}, 636--665 (1975)


\bibitem{kulinmuresan}
  Kulin, J., Mure\c san, C.:
  Some extremal values of the number of congruences of a finite lattice,
  \url{https://arxiv.org/pdf/1801.05282} (2018)




\bibitem{muresan}
  Mure\c san, C.:
  Cancelling congruences of lattices while keeping their filters and ideals. 
  \url{https://arxiv.org/pdf/1710.10183} (2017)

\bibitem{nation}
  Nation, J.B.: Notes on Lattice Theory. \nonau  
  \url{http://www.math.hawaii.edu/~jb/books.html}

\bibitem{rivaldismentlable}
  Rival, I.:
  Lattices with doubly irreducible elements.
  Canad. Math. Bull. \tbf{17}, 91-95 (1974)

\bibitem{ruzicka}
   R\r{u}\v{z}i\v{c}ka, P.: 
   Free trees and the optimal bound in Wehrung's theorem.
  Fund. Math. \tbf{198}, 217--228 (2008)

\bibitem{wehrung}
   Wehrung, F.:
   A solution to Dilworth's congruence lattice problem. 
   Adv. Math. \tbf{216}, 610--625 (2007)

\bibitem{wehrungchapter}
   Wehrung, F.:
   Schmidt and Pudl\'ak's approaches to CLP.
   In: Gr\"atzer, G., Wehrung, F. (eds.)
   Lattice Theory: Special Topics and Applications I, pp. 235--296.
   Birkh\"auser, Basel (2014)


\end{thebibliography}
\end{document}